\newtheorem{lemma}{Lemma}[section]
\newtheorem{prop}[lemma]{Proposition}
\newtheorem{thm}[lemma]{Theorem}
\newtheorem{cor}[lemma]{Corollary}
\theoremstyle{definition}
\newtheorem{defn}[lemma]{Definition}
\newtheorem{rem}[lemma]{Remark}
\newcommand{\Vect}{\mathrm{-Vect}}
\newcommand{\MMN}{\mathcal{MM}_{\rm Nori}}
\newcommand{\MMSA}{\mathcal{MM}_{\SA}}
\newcommand{\Ch}{\mathcal{C}}
\newcommand{\Xh}{\mathcal{X}}
\newcommand{\Yh}{\mathcal{Y}}
\newcommand{\isom}{\cong}
\newcommand{\tensor}{\otimes}
\newcommand{\cone}{\mathrm{Cone}}
\renewcommand{\ker}{\mathrm{Ker}}
\newcommand{\Ind}{\mathrm{Ind}}
\newcommand{\eff}{\mathrm{eff}}
\newcommand{\Qmod}{\Q\mathrm{-mod}}
\newcommand{\dR}{\mathrm{dR}}
\newcommand{\hodge}{\ul{H}_{\mathcal{H}}}
\newcommand{\sa}{\mathrm{sa}}
\newcommand{\sk}{\mathrm{sk}}
\newcommand{\VValg}{(\Qbar,\Q)\Vect}
\newcommand{\sing}{\mathrm{sing}}
\newcommand{\Hsing}{H^\sing}
\newcommand{\Hsingtilde}{\tilde{H}^\sing}
\newcommand{\pairs}{\mathrm{Pairs}}
\newcommand{\SA}{\mathsf{SA}}
\newcommand{\GSA}{\mathsf{GSA}}
\newcommand{\Simp}{\mathsf{Simp}}
\newcommand{\pairseff}{\pairs^\eff}
\newcommand{\Q}{\mathbb{Q}}
\newcommand{\Qbar}{{\overline{\Q}}}
\newcommand{\Qtilde}{{\widetilde{\Q}}}
\newcommand{\R}{\mathbb{R}}
\newcommand{\C}{\mathbb{C}}
\newcommand{\Pe}{\mathbb{P}} 
\newcommand{\Na}{\mathbb{N}}
\newcommand{\fT}{\mathfrak{T}}
\newcommand{\ul}[1]{\underline{#1}}
\begin{document}
\title{Semi-algebraic motives}
\author{Annette Huber}
\date{\today}
\address{Annette Huber, Math. Institut, Universit\"at Freiburg, Ernst-Zermelo-Str.~1, 79104 Freiburg, Germany}
\email{annette.huber@math.uni-freiburg.de}

\begin{abstract}
We define a category of motives for semi-algebraic spaces and show that
it is trivial. This implies that there is no good extension of algebraic de Rham cohomology to semi-algebraic spaces compatible with the period isomorphism.  
\end{abstract}
\maketitle


\section*{Introduction}

This note is motivated by the theory of periods. On the one hand, we can define them as the entries of period matrices between singular cohomology and algebraic de Rham cohomology for algebraic varieties defined over a number field. This point of view was initiated by Grothendieck in \cite{grothendieck_66}. It is closely linked to the theory of motives, pure and mixed, see Andr\'e's \cite{andre2}.
On the other hand, a more hands on approach in terms of integrals over semi-algebraic sets was advanced by Kontsevich and Zagier, see \cite{kontsevich_zagier}. The two definitions actually agree, see \cite[Chapter~12]{period-buch}. More recently, the author together with Commelin and Habegger established a similar comparison result also for exponential periods (where factors of the form 
$e^{-f}$ are allowed in the integrand), see \cite{expperI, expPerII}. On the one hand, Kontsevich and Zagier also have given an explicit definition in terms of semi-algebraic geometry. 
On the other hand, Fr\'esan and Jossen show how they can be understood as periods of exponential motives, see \cite{fresan-jossen}. There are some subtelties, but roughly these definitions agree and the numbers appear as volumes of definable sets in a certain o-minimal structure.

This raises the question: are these just coincidences or is there a more conceptual explanation for the sudden appearance of semi-algebraic or definable geometry in the theory of periods and motives? Maybe we can extend algebraic de Rham cohomology to semi-algebraic spaces? It turns out, this is not possible, at least not on this naive level. 
In this note we record the obstruction.

We introduce a theory of motives for semi-algebraic spaces over a fixed real closed field $k$ embedded into $\R$. (See Section~\ref{sec:generalisations} on more general possible settings.) It has the same universal property as Nori's category of motives for algebraic varieties: it is an abelian tensor category, universal for all cohomology theories compatible with singular homology. Hence a putative de Rham cohomology would also factor. We go on to show that the category of semi-algebraic motives is trivial, i.e., equivalent to the category of finite dimensional $\Q$-vector spaces. Its Tannaka dual is the trivial group. This would make all periods algebraic over $k$---but they are not. The same arguments apply to Hodge theory---where probably noone expected an extension anyway.

We still hope that a more careful de Rham theory could exist, but it would need to take into account more than just the structure of semi-algebraic spaces, e.g., analytic structures.

\section{Construction}
Let $k$ be a a real closed field contained in $\R$, $\bar{k}=k(i)$ its algebraic closure.
We are going to work in the category of $k$-semi-algebraic sets $X\subset\R^n$ (for varying $n$)
with morphism continuous and $k$-semi-algebraic. 
Let $\Qmod$ be the category of finite dimensional $\Q$-vector spaces.

We proceed in analogy to Nori's approach to motives of algebraic varieties.

\begin{defn}The quiver $\SA_k$ of \emph{semi-algebraic pairs over $k$}
has as vertices tuples $(X,Y,n)$ where $X$ is a $k$-semi-algebraic set,
$Y\subset X$ a $k$-semi-algebraic subset and $n\in\Na_0$ and edges of the form
\begin{itemize}
\item (functoriality) $f_*:(X,Y,n)\to (X',Y',n)$ for continuous
$k$-semi-algebraic $f:X\to X'$ such that $f(Y)\subset Y'$;
\item (boundary) $\partial: (X,Y,n)\to (Y,Z,n-1)$ for
$X\subset Y\subset Z$ inclusion of $k$-semi-algebraic sets.
\end{itemize}
We denote $\Hsing:\SA_k\to\Qmod$ the representation given by
\[ \Hsing: (X,Y,n)\mapsto H_n^\sing(X,Y;\Q).\]
\end{defn}

Recall that Nori attaches to every representation of a quiver an abelian category characterised by a universal property. See \cite[Chapter~7.1]{period-buch} for a survey of the construction and its properties.

\begin{defn}Let $\MMSA^\eff(k)=\Ch(\SA_k,\Hsing)$ be the diagram category of $\SA_k$ with respect to the representaton $\Hsing$. We factor $\Hsing$ as
\[ \SA_k\xrightarrow{\Hsingtilde}\MMSA^\eff(k)\xrightarrow{f}\Qmod.\]
\end{defn}
By construction, the functor $f$ is faithful and exact. The pair
$(\MMSA^\eff(k),\Hsingtilde)$ satisfies a universal property, see \cite[Theorem~7.1.13]{period-buch}.

\section{Alternative descriptions}
Following Nori's approach, we give an alternative construction of the category via the skeletal filtration. Its existence is a standard fact in semi-algebraic geometry.

Before we can go into this, we need to fix terminology. We follow \cite[Chapter~8]{D:oMin}, which differs from the literature in algebraic topology.
Let $n\in\Na_0$. Let $a_0,\dots,a_n\in k^N$ be affine independent.
The \emph{open $n$-simplex} defined by these vectors is the set
\begin{multline*}
\sigma = (a_0,\ldots,a_n)\\
=\left\{ \sum_{i=0}^n \lambda_i a_i \in\R^N:  \text{for all $i$ we have
$\lambda_i>0$ and }\lambda_0+\cdots + \lambda_n =
1\right\}.
\end{multline*}
The closure of $\sigma$ is the \emph{closed $n$-simplex} $[a_0,\ldots,a_n]$.
As usual, a face of $\sigma$ is a simplex spanned by a non-empty
subset of $\{a_0,\ldots,a_n\}$.

A finite set $K$ of simplices in $\R^N$ is called a \emph{complex} if
for all $\sigma_1,\sigma_2\in K$ the intersection
$\overline{\sigma_1}\cap\overline{\sigma_2}$ is either empty or the
closure of common face $\tau$ of $\sigma_1$ and $\sigma_2$.
Van den Dries's definition does not ask for $\tau$ to lie in $K$. So
the \emph{polyhedron} spanned by $K$
$$|K|  = \bigcup_{\sigma \in K} \sigma$$
may not be a closed subset of $\R^N$.
Note that $|K|$ is $k$-semi-algebraic. 
We call $K$ a \emph{closed complex} if $|K|$ is closed or equivalently,
if for all $\sigma\in K$ and all faces $\tau$ of $\sigma$, we have $\tau\in K$.
In this case, $|K|$ is compact.

If $X$ is a $k$-semi-algebraic set, then a 
 \emph{semi-algebraic triangulation} of $X$ is a pair
$(h,K)$ where $K$ is a complex and where
$h:|K|\rightarrow X$ is a $k$-semi-algebraic  homeomorphism.
We write $\Xh$ for a $k$-semi-algebraic set $X$ together with a choice of
triangulation. Triangulations exist by \cite[Theorem~9.2.1]{BCR} or \cite[Theorem~8.2.9]{D:oMin}.

\begin{defn}
%
The quiver $\Simp_k$ quiver has as  vertices tuples $(\Xh,\Yh,n)$ where
$\Xh$ is a closed complex of dimension $n$ and $\Yh=\sk_{n-1}\Xh$ its $(n-1)$-skeleton, and edges of the form
\begin{itemize}
\item (functoriality) $f_*:(\Xh,\Yh,n)\to (\Xh',\Yh',n)$ for continuous
$k$-semi-algebraic $f:X\to X'$ such that $f(Y)\subset Y'$;\footnote{\emph{sic}, we do not require $f$ to be simplicial.}.
\item (boundary) $\partial: (\Xh,\sk_{n-1}\Xh,n)\to (\sk_{n-1}\Xh,\sk_{n-2}\Xh,n-1)$. 
\end{itemize}


\end{defn}
Note that there is a natural forgetful map of quivers $\Simp_k\to \SA_k$.
%
%
The restriction of $\Hsing$ to 
$\Simp_k$ induces a faithful exact functor
\[ \Ch(\Simp_k,\Hsing)\to \MMSA(k).\]
We want to show that the inclusion is an equivalence.
%
%
The following lemma is the key step.

\begin{lemma}
There is a representations of $\SA_k$ in $\Ch(\Simp_k,\Hsing)$ 
such that the diagram
\[\begin{xy}\xymatrix{
 \Simp_k\ar[d]\ar[r]& \SA_k\ar[d]\ar[dl]\\
  \Ch(\Simp_k,\Hsing)\ar[r]& \Qmod
}\end{xy}\]
commutes up to isomorphism.
\end{lemma}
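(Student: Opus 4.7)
The plan is to associate to every vertex $(X,Y,n)$ of $\SA_k$ an object $R(X,Y,n)\in\Ch(\Simp_k,\Hsing)$ whose image under $f$ is canonically $H^\sing_n(X,Y;\Q)$, and to every edge a morphism between such objects. The guiding idea is to realise relative singular homology as the $n$-th homology of a cellular chain complex, carried out entirely inside the abelian category $\Ch(\Simp_k,\Hsing)$.

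For a vertex $(X,Y,n)$, pick by \cite[Theorem~8.2.9]{D:oMin} a semi-algebraic triangulation $(h,\Xh)$ of $X$ such that $\Yh:=h^{-1}(Y)$ is a subcomplex. For each $i\geq 0$ the skeleton $\sk_i\Xh$ is a closed complex of dimension $i$, so $(\sk_i\Xh,\sk_{i-1}\Xh,i)$ is a vertex of $\Simp_k$ whose image is free on the $i$-simplices of $\Xh$. The inclusion of subcomplexes $\sk_i\Yh\hookrightarrow\sk_i\Xh$ is a functoriality edge that becomes a monomorphism in $\Ch(\Simp_k,\Hsing)$; let $C_i(\Xh,\Yh)$ denote its cokernel. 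The boundary edges $\partial$ of $\Simp_k$ descend to differentials on $C_\bullet(\Xh,\Yh)$ because $\Yh$ is a subcomplex. Define $R(X,Y,n)$ as the $n$-th homology of this complex in $\Ch(\Simp_k,\Hsing)$. Exactness of $f$ then identifies $f(R(X,Y,n))$ with cellular, hence singular, homology.

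Edges are handled by semi-algebraic simplicial approximation. A functoriality edge $(X,Y,n)\to(X',Y',n)$ attached to $g:X\to X'$ lifts by replacing $g$, after iterated barycentric subdivision of $\Xh$ compatibly with $\Yh$, by a simplicial map $\tilde g$ with $\tilde g(\Yh)\subset\Yh'$; this $\tilde g$ consists of genuine functoriality edges of $\Simp_k$ between the skeleta and therefore induces a morphism $R(X,Y,n)\to R(X',Y',n)$. A boundary edge $(X,Y,n)\to(Y,Z,n-1)$ is realised by picking a triangulation in which both $Y$ and $Z$ are subcomplexes and identifying the singular connecting homomorphism with the snake-lemma map for the short exact sequence $0\to C_\bullet(\Yh,\Zh)\to C_\bullet(\Xh,\Zh)\to C_\bullet(\Xh,\Yh)\to 0$ living in $\Ch(\Simp_k,\Hsing)$.

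The main obstacle is showing that $R$ depends, up to canonical isomorphism, neither on the chosen triangulation nor on the chosen simplicial approximation; only then are the required isomorphism classes of maps well-defined and the commutativities verifiable. The needed ingredients are the semi-algebraic analogues of common subdivision of triangulations and of simplicial approximation with homotopy uniqueness, both of which should be extractable from \cite{D:oMin} or \cite{BCR}. Granting these, commutativity of the triangle follows: the bottom composition is $\Hsing$ by construction and exactness of $f$, and for $(\Xh,\sk_{n-1}\Xh,n)\in\Simp_k$ the cellular complex of $R$ is concentrated in degree $n$ and literally equals the vertex itself, matching $\Hsingtilde$.
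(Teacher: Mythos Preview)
Your approach has a genuine gap at the very first step: you assert that for a semi-algebraic triangulation $(h,\Xh)$ of an arbitrary $k$-semi-algebraic set $X$, each skeleton $\sk_i\Xh$ is a \emph{closed} complex, so that $(\sk_i\Xh,\sk_{i-1}\Xh,i)$ is a vertex of $\Simp_k$. This is false whenever $X$ is not compact. In van den Dries's sense a triangulating complex $K$ need not contain all faces of its simplices, and $|K|$ need not be closed; for instance the open interval $(0,1)$ is triangulated by a single open $1$-simplex, and none of its skeleta are closed complexes. Hence the objects you build from the skeletal filtration do not lie in $\Ch(\Simp_k,\Hsing)$ at all, and the construction of $R(X,Y,n)$ breaks down before any question of well-definedness arises.

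The paper circumvents precisely this obstruction by a two-step argument. First it introduces an auxiliary quiver $\GSA_k^+$ whose vertices are triangulated semi-algebraic sets together with their $(n-1)$-skeleta, \emph{without} the closedness hypothesis; the skeletal-filtration complex (with well-definedness handled by a direct limit over all triangulations rather than by common-subdivision and simplicial-approximation arguments, and relative pairs treated via cones) then lands in $\Ch(\GSA_k^+,\Hsing)$. Second, it passes from $\GSA_k^+$ back to $\Simp_k$ by sending a triangulated set to the closed core of its barycentric subdivision, a semi-algebraic deformation retract which is by construction a closed complex and satisfies $\sk_i r(X)=r(\sk_i X)$. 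This retraction is the missing idea in your proposal; without it, or some substitute that produces genuine closed complexes from arbitrary semi-algebraic triangulations, the argument cannot be completed.
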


\begin{proof}
We proceed in two steps. 
The first is analogous to Nori's argument in the case of algebraic varieties, see  \cite[Theorem~9.2.22]{period-buch}, but without the complication by affine covers and \v{C}ech-complexes.

Let $\GSA^+_k$ be the quiver with vertices of the form
$(\Xh,\Yh,n)$ where 
$\Xh$ is a semi-algebraic set of dimension $n$ with a fixed triangulation and
$\Yh$ its $(n-1)$-skeleton. The edges are of the same form as in
$\SA_k$. We do not impose additional compatibility with the triangulation.
In contrast to $\Simp_k$ we do not assume the complex to be closed.
We want to represent $\SA_k$ in $\Ch(\GSA_k^+,\Hsing)$.

Let $\Xh=(X,\fT)$ be a semi-algebraic $X$ with a semi-algebraic triangulation
and put  $X_i=|\sk_i\Xh$.
The boundary maps for  the skeletal filtration define a complex
$C(X,\fT)$
\begin{equation*}\label{eq:good_complex}
 0\to \Hsingtilde_d(X_d,X_{d-1})\to \Hsingtilde_{d-1}(X_{d-1},X_{d-2})\to\dots\to \Hsingtilde_1(X_1,X_0)\to \Hsingtilde(X_0)\to 0
\end{equation*}
in $\MMSA(k)$ whose homology agrees with $\Hsingtilde_*(X)$.
Actually, the complex and hence its homology is in the subcategory
$\Ch(\GSA^+_k,\Hsing)$.

The system of semi-algebraic triangulations
of $X$ is directed because any two triangulations have a common refinement.
Let $\fT_1$ be a semi-algebraic triangulation of $X$ and $\fT_2$ 
 a refinement. Then the skeletal filtration of
$(X,\fT_1)$ is is contained in the skeletal filtration of $(X,\fT_2)$.
The induced map $C(X,\fT_1)\to C(X,\fT_2)$ is a quasi-isomorphism because both
compute singular homology of $X$.
We define
\[ C(X)=\varinjlim_{\fT}C(X,\fT)\]
as the direct limit over all triangulations of $X$. The limit exists
in the ind-category of $\Ch(\GSA_k^+,\Hsing)$. The direct limit is exact, hence $C(X)$ is quasi-isomorphic to each $C(X,\fT)$. In particular, it has
homology in
$\Ch(\GSA_k^+,\Hsing)$. 

Given a continuous semi-algebraic map of semi-algebraic sets
$f:V\to W$ and a semi-algebraic triangulation $\fT_V$ on $V$,
we may choose a semi-algebraic triangulation $\fT_W$ of $W$ such that
$f(\sk_i(\fT_V))\subset\sk_i(\fT_W)$. Hence we get an induced map
\[ f_*:C(V,\fT_V)\to C(W,\fT_W).\]
In turn this induces a map
\[ f_*:C(V)\to C(W).\]
In the particular case $Y\subset X$, we write
\[ C(X,Y)=\cone\left( C(Y)\to C(X)\right).\]
We now can write down the representation. We map a vertex $(X,Y,n)$ of
$\SA_k$ to
\[ (X,Y,n)\mapsto H_n(C(X,Y))\in\Ch(\GSA_k^+,\Hsing).\]
An edge $f:(X,Y,n)\to (X',Y',n)$ is mapped to $H_n(f_*)$ for
\[ f_*:C(X,Y)\to C(X',Y').\]
An edge $\partial:(X,Y,n)\to (Y,Z,n-1)$ for  a triple
$Z\subset Y\subset X$ is mapped to the connecting morphism of the
distinguished triangule in $D^b(\Ind-\Ch(\GSA_k^+,\Hsing))$
\[ C(Y,Z)\to C(X,Z)\to C(X,Y).\]
In all, we have constructed our representation in $\Ch(\GSA_k^+,\Hsing)$, finishing the first step.

Note that $\Simp_k$ is a full subquiver of $\GSA_k^+$. We show that the
induced faithful exact functor $\Ch(\Simp_k,\Hsing)\to \Ch(\GSA_k^+,\Hsing)$
is an equivalence. We do this by constructing an inverse as a map of quivers.

Given a triangulated semi-algebraic set $X$, we write $r(X)$ for the 
closed core of the barycentric subdivison of the triangulation of $X$, see \cite[Section~6.2]{expperI}. By loc. cit. Proposition 6.7 it
is a deformation retract of $X$. Note that by construction $\sk_i r(X)=r(\sk_i X)$, hence $r$ induces a map of quivers
\[ r:\GSA^+_k\to\Simp_k.\]
The representation $\Hsing\circ r$ of $\GSA_k^+$ is naturally isomorphic to
$\Hsing$. Hence $r$ induces a functor
\[ r_*: \Ch(\GSA^+_k,\Hsing)\to \Ch(\Simp_k,\Hsing).\] 

We get our representation of $\SA_k$ in $\Ch(\Simp_k,\Hsing)$ by composing the representation of $\SA_k$ in $\Ch(\GSA_k^+,\Hsing)$ with $r_*$.
\end{proof}
\begin{prop}\label{prop:equiv}
The inclusion 
\[ \Ch(\Simp_k,\Hsing)\subset\MMSA(k)^\eff\]
 is an equivalence of categories.
\end{prop}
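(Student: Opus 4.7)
The plan is to invoke the universal property of the Nori diagram category \cite[Theorem~7.1.13]{period-buch} to turn the representation of the lemma into a functor, and then verify it is a quasi-inverse of the inclusion $F : \Ch(\Simp_k,\Hsing) \to \MMSA(k)^\eff$.

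Let $\rho : \SA_k \to \Ch(\Simp_k,\Hsing)$ be the representation produced by the lemma, together with its fixed isomorphism to $\Hsing$. The universal property of $\MMSA(k)^\eff = \Ch(\SA_k,\Hsing)$ yields a faithful exact functor
\[ G : \MMSA(k)^\eff \longrightarrow \Ch(\Simp_k,\Hsing) \]
together with a natural isomorphism $G \circ \Hsingtilde \cong \rho$. To show $F \circ G \cong \id_{\MMSA(k)^\eff}$, it suffices by the universal property to produce a natural isomorphism $F \circ \rho \cong \Hsingtilde$ of representations of $\SA_k$ in $\MMSA(k)^\eff$. For a vertex $(X,Y,n)$ one has $F\rho(X,Y,n) = H_n(F\, C(X,Y))$ since $F$ is exact. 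The complex $F\, C(X,Y)$ inside $\MMSA(k)^\eff$ is the skeletal-filtration complex assembled from $\Hsingtilde(X_d,X_{d-1},d)$ via the boundary edges of $\SA_k$, and exactly the argument used in the proof of the lemma shows that its $n$-th homology is $\Hsingtilde(X,Y,n)$. That argument works in any abelian category admitting a faithful exact functor to $\Qmod$, because an object whose image in $\Qmod$ vanishes must itself vanish. Naturality in the functoriality and boundary edges was built into $\rho$.

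For the opposite composite $G \circ F \cong \id_{\Ch(\Simp_k,\Hsing)}$, the universal property of $\Ch(\Simp_k,\Hsing)$ applied to its own universal representation reduces matters to the natural isomorphism $\rho \circ i \cong (\text{universal representation of }\Simp_k)$, where $i : \Simp_k \to \SA_k$ is the forgetful map of quivers. For a vertex $(\Xh,\sk_{n-1}\Xh,n)$, choosing $\Xh$ itself as the triangulation of $X$ makes the cone $C(X,\sk_{n-1}X)$ concentrated in degree $n$, with value $\Hsingtilde(X,\sk_{n-1}X,n)$; its $n$-th homology is therefore this object, and the universal representation of $\Simp_k$ sends $(\Xh,\sk_{n-1}\Xh,n)$ to the same object.

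Most of the real work is done in the lemma. The remaining obstacle, minor but real, is checking that the isomorphisms above are natural in the quiver variables, not merely objectwise. In particular, one must verify that the boundary edges of $\SA_k$ really correspond, under the identification of the skeletal complex's $n$-th homology with $\Hsingtilde(X,Y,n)$, to the connecting morphisms of the distinguished triangles constructed in the lemma, and that the use of the direct limit $C(X) = \varinjlim_\fT C(X,\fT)$ does not disturb these compatibilities. All these points were already addressed in the construction of $\rho$, so the verification reduces to tracing definitions.
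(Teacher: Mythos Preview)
Your approach is the same as the paper's: use the universal property of the diagram category to turn the representation of the lemma into a faithful exact functor $\MMSA(k)^\eff \to \Ch(\Simp_k,\Hsing)$ and argue it is inverse to the inclusion. The paper's proof is much terser---it simply asserts the functor ``is inverse to the inclusion'' without unpacking the two composites as you do---so your elaboration is a reasonable expansion, modulo the minor imprecision that your formula $F\rho(X,Y,n)=H_n(F\,C(X,Y))$ elides the $r_*$ step in the two-stage construction of $\rho$ (one should first use that $F$ factors through $\Ch(\GSA_k^+,\Hsing)$ and that $r_*$ is inverse to that inclusion).
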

\begin{proof}
By the universal property of the diagram category, the representation
$\SA_k\to\Ch(\Simp_k,\Hsing)$ induces a faithful exact functor
\[ \MMSA(k)^\eff\to \Ch(\Simp_k,\Hsing).\]
It is inverse to the inclusion. 
This makes  the categories equivalent.
\end{proof}

\section{Tensor product and comparison with Nori motives}
The same methods as in \cite[Chapter~9]{period-buch} can be used to equip an suitable subquiver of good paris of $\SA_k$ with a weak commutative product structure in the sense of loc. cit. Remark 8.1.6 or, equivalently, the structure of a graded $\tensor$-quiver as in 
\cite[Definition~2.13]{BHP}. We omit the details.


\begin{prop}
The category $\MMSA(k)^\eff$ carries a natural commutative tensor product such that $\Hsingtilde_*$ satisfies the K\"unneth formula, i.e., the forgetful functor
$\MMSA(k)^\eff\to\Qmod$ is a tensor functor.
\end{prop}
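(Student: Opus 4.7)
The plan is to transport Nori's construction of the tensor product on motives from \cite[Chapter~9]{period-buch} to the semi-algebraic setting; the simpler geometry here actually streamlines matters, since semi-algebraic triangulations are available globally and no Čech/affine cover machinery is needed.

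First I would introduce the full subquiver $\goodeff_k\subset\SA_k$ of \emph{good pairs}, i.e., triples $(X,Y,n)$ with $H_i^\sing(X,Y;\Q)=0$ for $i\neq n$. On vertices, define
\[
(X,Y,n)\otimes(X',Y',n') = (X\times X',\ X\times Y'\cup Y\times X',\ n+n').
\]
The Künneth formula for singular homology with rational coefficients (no Tor term, since $\Q$ is a field) gives a canonical isomorphism between $\Hsing$ applied to the two sides and shows that the product is concentrated in degree $n+n'$, so products of good pairs are again good. Together with functoriality, boundary, and product edges this equips $\goodeff_k$ with a graded $\otimes$-quiver structure in the sense of \cite[Definition~2.13]{BHP}, equivalently a weak commutative product structure in the sense of \cite[Remark~8.1.6]{period-buch}.

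Next I would show that the inclusion $\goodeff_k\hookrightarrow\SA_k$ induces an equivalence $\Ch(\goodeff_k,\Hsing)\simeq\MMSA(k)^\eff$. The key observation is that, for a triangulated semi-algebraic set $\Xh$ of dimension $d$, every pair $(|\sk_i\Xh|,|\sk_{i-1}\Xh|,i)$ is a good pair: its relative singular homology is freely generated by the open $i$-simplices and concentrated in degree $i$. Hence the skeletal complex $C(X,\fT)$ constructed in the proof of the key lemma already lives in $\goodeff_k$. Running the same argument — skeletal filtration combined with the closed core retract $r$ and passage to the limit over refinements of triangulations — produces a representation of $\SA_k$ in $\Ch(\goodeff_k,\Hsing)$, and the universal property of the diagram category then yields the claimed equivalence.

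Finally, the general machinery extending a graded $\otimes$-quiver structure to a tensor product on its diagram category (the same mechanism invoked in \cite[Chapter~9]{period-buch}, see also \cite{BHP}) endows $\Ch(\goodeff_k,\Hsing)$, and hence via the equivalence $\MMSA(k)^\eff$, with a commutative tensor product such that the fiber functor $f:\MMSA(k)^\eff\to\Qmod$ is a tensor functor — which is exactly the Künneth formula for $\Hsingtilde_*$. The main obstacle is bookkeeping rather than conceptual content: one must verify the graded $\otimes$-quiver axioms for $\goodeff_k$, in particular compatibility of the boundary edges $\partial$ with products (which reduces to the standard behaviour of connecting maps under Künneth) and the identification of a unit object (the one-point space). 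These checks are routine adaptations of their algebraic counterparts in \cite[Chapter~9]{period-buch}, and arguably easier here.
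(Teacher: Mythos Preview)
Your proposal is correct and follows precisely the approach the paper sketches: the paper does not give a proof but merely indicates (just before the proposition) that one equips a subquiver of good pairs with a weak commutative product structure, i.e., a graded $\otimes$-quiver structure, by the methods of \cite[Chapter~9]{period-buch} and \cite{BHP}, omitting the details. Your write-up supplies exactly these omitted details---the good-pairs subquiver, the K\"unneth verification, the equivalence via the skeletal argument, and the invocation of the general machinery---so there is nothing to compare.
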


Recall that every quasi-projective algebraic variety over $\bar{k}=k(i)$ can be embedded into some big $\R^N$ as a bounded $k$-semi-algebraic subspace, see for example \cite[Lemma~2.6.6]{period-buch}. (The reference is over
$\Qbar$, but the same argument works in our case.) 
We denote this functor $X\mapsto X^\sa$.

The functor  induces a map of quivers
\[ \cdot^\sa:\pairseff_{\bar{k}}\to \SA_k\]
where $\pairseff_{\bar{k}}$ is Nori's quiver of effective pairs. Its vertices
are tuples $(X,Y,n)$ with $X$ a quasi-projective variety over $\bar{k}$, $Y$ a closed subvariety and $n\geq 0$. Edges are given by functoriality and boundary as in the semi-algebraic case (or indeed the other way around).
Recall that the category of effective (homological) Nori motives $\MMN(\bar{k})^\eff$ is defined
as the diagram category $\Ch(\pairs^\eff_{\bar{k}},\Hsing)$.

\begin{rem}This is Nori's original set-up. In \cite{period-buch}, the cohomological version is used instead.
The two points of view are dual to each and lead to equivalent theories of non-effective motives.
The author has come round to the idea that homology is more suitable.
\end{rem}

\begin{prop}
The functor $\cdot^\sa$ induces a faithful exact tensor functor
\[ \MMN(\bar{k})^\eff\to\MMSA(k)^\eff.\]
\end{prop}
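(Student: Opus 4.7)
The plan is to invoke the universal property of Nori's diagram category \cite[Theorem~7.1.13]{period-buch}. I would form the composition
\[
T\colon \pairs^\eff_{\bar k}\xrightarrow{\;\cdot^\sa\;}\SA_k\xrightarrow{\;\Hsingtilde\;}\MMSA(k)^\eff,
\]
and check that $f\circ T\cong \Hsing$ as representations of $\pairs^\eff_{\bar k}$ in $\Qmod$, where $f:\MMSA(k)^\eff\to\Qmod$ is the faithful exact forgetful functor. This compatibility reduces to the fact, built into the construction in \cite[Lemma~2.6.6]{period-buch}, that for a quasi-projective $\bar k$-variety $X$ the embedding realizing $X^\sa$ is a homeomorphism onto its image with respect to the analytic topology on $X(\C)$, and similarly for the pair $(X,Y)$. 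Since singular (relative) homology is a topological invariant, the isomorphism $H_n^\sing(X^\sa,Y^\sa;\Q)\cong H_n^\sing(X(\C),Y(\C);\Q)$ is natural in morphisms of pairs and compatible with the connecting homomorphism. By the universal property, $T$ then factors through a unique faithful exact functor $\MMN(\bar k)^\eff\to\MMSA(k)^\eff$ sitting over the identity on $\Qmod$.

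For the tensor structure, the key geometric input is that $\cdot^\sa$ commutes with products: choosing embeddings $X\hookrightarrow\R^N$ and $Y\hookrightarrow\R^M$, the product embedding realizes $(X\times Y)^\sa$ as $X^\sa\times Y^\sa\subset\R^{N+M}$. It follows that $\cdot^\sa$ carries the good-pair subquiver used to define the tensor product on $\MMN(\bar k)^\eff$ into the analogous subquiver of $\SA_k$ used to define the tensor product on $\MMSA(k)^\eff$, compatibly with the edges encoding products and K\"unneth decompositions. Thus the induced functor on diagram categories is a graded $\tensor$-quiver map in the sense of \cite[Definition~2.13]{BHP}, hence a tensor functor.

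The main obstacle is the last step: one needs to match the two tensor constructions along the realization functor. Since the paper explicitly omits the details of the good-pair/tensor construction on the semi-algebraic side, the verification consists of confirming that the identical recipe was followed in both settings (e.g.\ cellular filtrations, K\"unneth good pairs, and $(X,Y,n)\tensor(X',Y',n')=(X\times X',X\times Y'\cup Y\times X',n+n')$), and that these structural choices are compatible under $\cdot^\sa$. Once this is recorded, exactness and faithfulness are automatic consequences of the universal property, and the tensor compatibility follows formally from the product-preserving nature of semi-algebraic realization.
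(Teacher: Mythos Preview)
Your proposal is correct and follows essentially the same approach as the paper: form the composite representation $\pairs^\eff_{\bar k}\to\SA_k\xrightarrow{\Hsingtilde}\MMSA(k)^\eff$, observe its compatibility with $\Hsing$, invoke the universal property of the diagram category to obtain the faithful exact functor, and then argue that the tensor structures match because the good-pair/product constructions on both sides are parallel. In fact you supply more justification than the paper does (the homeomorphism argument for compatibility with $\Hsing$, and the explicit product-preservation for the tensor part), whereas the paper simply asserts compatibility and notes that the tensor construction ``which we did not spell out'' is compatible.
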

\begin{proof}
We have a representation 
\[ \pairs^\eff_{\bar{k}}\to \SA_k\xrightarrow{\Hsingtilde}\MMSA(k)^\eff\]
compatible with $\Hsing$.
The existence of the functor follows from the universal property. 
It is a tensor functor because the construction of the tensor functor (which we did not spell out) is compatible.
\end{proof}

We write $M\mapsto M^\sa$ for this functor on motives.

\section{The main result}

\begin{lemma}
Every object of $\MMSA(k)^\eff$ is a subquotient of an object which is the direct sum of objects of the form $\Hsingtilde_n(\Delta_n,\partial \Delta_n)$ for $n\geq 0$. Here $\Delta_n$ is a closed linear $n$-simplex in some $\R^N$ with vertices in $k$.
\end{lemma}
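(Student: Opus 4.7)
The plan is to combine Proposition~\ref{prop:equiv} with the excision-style decomposition of a closed complex into its top-dimensional simplices. By that proposition, $\MMSA(k)^\eff = \Ch(\Simp_k,\Hsing)$, and by the general construction of Nori's diagram category every object is a subquotient of $\Hsingtilde(v)$ for some vertex $v$ of $\Simp_k$. Such a $v$ has the shape $(\Xh, \sk_{n-1}\Xh, n)$ with $\Xh$ a closed complex of dimension $n$. It therefore suffices to exhibit, for each such $v$, an isomorphism
$$\Hsingtilde_n(\Xh, \sk_{n-1}\Xh) \;\cong\; \bigoplus_{\sigma} \Hsingtilde_n(\sigma, \partial\sigma)$$
in $\MMSA(k)^\eff$, where $\sigma$ ranges over the (finitely many) $n$-simplices of $\Xh$, each regarded together with all its faces as a closed sub-complex of dimension $n$.

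To construct such an isomorphism, I observe that for each $n$-simplex $\sigma$ of $\Xh$ the inclusion $\sigma \hookrightarrow |\Xh|$ is a continuous semi-algebraic map carrying $\partial\sigma$ into $\sk_{n-1}\Xh$. By the footnote in the definition of $\Simp_k$, which explicitly waives the requirement that morphisms be simplicial, this is a functoriality edge
$$(\sigma, \partial\sigma, n) \longrightarrow (\Xh, \sk_{n-1}\Xh, n)$$
of $\Simp_k$, and therefore yields a morphism $\alpha_\sigma$ in $\MMSA(k)^\eff$. Summing these in the abelian category, I obtain a candidate morphism
$$\alpha \colon \bigoplus_\sigma \Hsingtilde_n(\sigma, \partial\sigma) \longrightarrow \Hsingtilde_n(\Xh, \sk_{n-1}\Xh).$$

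The remaining task is to upgrade $\alpha$ to an isomorphism. Under the faithful exact forgetful functor $f\colon \MMSA(k)^\eff \to \Qmod$, $f(\alpha)$ is the standard identification of $H_n^\sing(\Xh,\sk_{n-1}\Xh)$ with the $n$-th cellular chain group of $\Xh$, an isomorphism by excision together with the contractibility of each closed $n$-simplex. Since any faithful exact functor between abelian categories reflects isomorphisms (its kernel and cokernel map into those of $f(\alpha)$, which are zero, so they themselves vanish), $\alpha$ is an isomorphism in $\MMSA(k)^\eff$. As each $\sigma$ is a closed linear $n$-simplex with vertices in $k^N$, this gives the required decomposition. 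The only non-routine ingredient is this final reflection-of-isomorphisms argument, which promotes a cellular-chain identification of underlying vector spaces to one in the motivic category; the rest is the observation that the identification already comes from edges of the quiver $\Simp_k$.
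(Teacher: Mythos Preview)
Your argument is correct and follows essentially the same route as the paper: reduce via Proposition~\ref{prop:equiv} to vertices of $\Simp_k$, then use the inclusion of the top-dimensional simplices to obtain a map which is an isomorphism by excision, lifted to the motivic category because the forgetful functor is faithful exact. The only cosmetic difference is that the paper packages the simplices into a single disjoint-union vertex $(X^\Delta,Y^\Delta,n)$ and a single functoriality edge, whereas you sum the individual edges in the abelian category; these are equivalent since the exact forgetful functor preserves finite direct sums.
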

\begin{proof}
We use the characterisation $\MMSA(k)^\eff=\Ch(\Simp_k,\Hsing)$ of Proposition~\ref{prop:equiv}. By general properties of the diagram category, see \cite[Propostition~7.1.16]{period-buch}, every object is subquotient of an object attached to a vertex of the quiver, i.e., of the
form $\Hsingtilde_n(X,Y)$ for a closed complex $X$ of dimension $n$ and $Y=\sk_n X$. By definition, $X$ is constructed from $Y$ by gluing in
finitely many closed simplices of dimension $n$. Let $X^\Delta$ be the disjoint union of these simplices, $Y^\Delta$ the disjoint union of their boundaries. By excision, $\Hsingtilde_n(X^\Delta,Y^\Delta)\to \Hsing_n(X,Y)$ is an isomorphism.
\end{proof}

\begin{thm}
The category $\MMSA(k)^\eff$ is equivalent to the tensor category of finite dimensional $\Q$-vector spaces. The functor $\cdot^\sa:\MMN(k)^\eff\to\MMSA(k)^\eff$
agrees with singular homology.
\end{thm}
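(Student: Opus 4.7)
The plan is to use the preceding lemma to reduce every object of $\MMSA(k)^\eff$ to a sum of copies of the unit motive $\Q:=\Hsingtilde_0(\mathrm{pt})$, and then to produce an explicit equivalence $\MMSA(k)^\eff\simeq\Qmod$. The second assertion will then follow formally from Nori's universal property.

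\emph{Step 1.} I would first show $\Hsingtilde_n(\Delta_n,\partial\Delta_n)\cong\Q$ in $\MMSA(k)^\eff$ for every $n\geq 0$. Throughout, faithful exactness of $f:\MMSA(k)^\eff\to\Qmod$ lets me transfer topological facts to $\MMSA(k)^\eff$: $f$ reflects isomorphisms, monomorphisms, epimorphisms, and three-term exactness, so the boundary edges of $\SA_k$ (taken with $Z=\emptyset$) assemble into long exact sequences of pairs, and semi-algebraic homotopies induce equal morphisms. A linear contraction of $\Delta_n$ to a vertex then forces $\Hsingtilde_i(\Delta_n)=0$ for $i>0$, and the long exact sequence gives $\Hsingtilde_n(\Delta_n,\partial\Delta_n)\cong\Hsingtilde_{n-1}(\partial\Delta_n)$ for $n\geq 2$. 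Writing $\partial\Delta_n=A\cup B$ with $A=\bigcup_{i=1}^n[a_0,\dots,\hat{a}_i,\dots,a_n]$ and $B=[a_1,\dots,a_n]$ (both contractible, with $A\cap B=\partial\Delta_{n-1}$), a Mayer--Vietoris argument assembled from excision and the long exact sequences of the relevant triples gives $\Hsingtilde_{n-1}(\partial\Delta_n)\cong\Hsingtilde_{n-2}(\partial\Delta_{n-1})$; the induction terminates at $\Hsingtilde_1(\Delta_1,\partial\Delta_1)\cong\Q$, computed directly from the disjoint-union decomposition $\partial\Delta_1=\{a_0\}\sqcup\{a_1\}$.

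\emph{Step 2.} Faithfulness of $f$ forces $\End_{\MMSA(k)^\eff}(\Q)=\Q$, so the rule $\Q^n\mapsto\Q^{\oplus n}$ (powers of the unit motive) extends to a $\Q$-linear functor $G:\Qmod\to\MMSA(k)^\eff$ with $f\circ G=\id$. The technical core is that $f$ induces a bijection between subobjects of $\Q^{\oplus N}$ in $\MMSA(k)^\eff$ and $\Q$-subspaces of $\Q^N$: for injectivity, if $L_1,L_2$ have equal $f$-image then the composite $L_1\to\Q^{\oplus N}\to\Q^{\oplus N}/L_2$ has zero $f$-image and hence vanishes, forcing $L_1\subset L_2$; for surjectivity, the monomorphism $G(V)\to\Q^{\oplus N}$ realizes any subspace $V$. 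A dual argument handles quotients, so every subquotient of $\Q^{\oplus N}$ is isomorphic to some $\Q^{\oplus m}$. Combined with Step~1 and the preceding lemma, this makes $G$ essentially surjective; full faithfulness reduces to $\End(\Q)=\Q$. The tensor structure is preserved since $f$ is already a tensor functor.

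\emph{Step 3.} For the second statement, the composition $f\circ\cdot^\sa:\MMN(\bar{k})^\eff\to\Qmod$ is determined by its restriction to the quiver $\pairs^\eff_{\bar{k}}$, which by construction of $\cdot^\sa$ coincides with the singular homology representation. By Nori's universal property applied to $\MMN(\bar{k})^\eff$, this identifies $\cdot^\sa$ with singular homology once the equivalence of Step~2 is invoked. The main obstacle I foresee is the Mayer--Vietoris ingredient in Step~1: since the quiver $\SA_k$ records only functoriality and boundary edges, the relevant sequence must be manufactured inside $\MMSA(k)^\eff$ from excision and the long exact sequences of triples, analogously to the Nori theory for algebraic varieties.
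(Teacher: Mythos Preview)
Your proof is correct and follows essentially the same route as the paper: reduce via the preceding lemma to showing $\Hsingtilde_n(\Delta_n,\partial\Delta_n)\cong\Q(0)$, establish this by an inductive excision/Mayer--Vietoris argument on the decomposition of $\partial\Delta_n$ into a face and the complementary cone, and conclude by exploiting faithful exactness of $f$. The only cosmetic differences are that the paper organises the induction through the pair $(\partial\Delta_{n+1},\partial\Delta_n)$ and an explicit direct-sum splitting $\Hsingtilde_n(\partial\Delta_{n+1},\partial\Delta_n)\cong\Hsingtilde_n(\Delta_n,\partial\Delta_n)^2$, and dispatches the equivalence with $\Qmod$ more tersely (``fullness follows if it holds for $\Q(0)\mapsto\Q$''), whereas you build an explicit quasi-inverse $G$; both are standard.
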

\begin{proof}
The argument is the same as for the homotopy category of simplical complexes in algebraic topology. We review it in order to clarify that the isomorphisms involved in the proof are motivic.

Let $\Delta_{n+1}$ be the (closed) standard $(n+1)$-simplex. Its boundary $\partial \Delta_{n+1}$ is
a simplicial version of the $n$-sphere $S_n$. The embedding
of $S_{n-1}$ into $S_n$ as the equator is replaced by the embedding of
$\partial\Delta_{n}$ into $\partial\Delta_{n+1}$ induced by the embedding
of $\Delta_n$ into $\Delta_{n+1}$ as one face. We think of this face as the lower hemisphere. Let $C_n$ be the union of the remaining faces. It is a cone over $\partial \Delta_{n-1}$ and homotopy equivalent to a subdivision of an $n$-simplex. We think of it as the upper hemisphere.

The long exact sequence in homology gives for $n\geq 1$
\[0\to\Hsingtilde_{n}(\partial\Delta_{n+1})\to  \Hsingtilde_{n}(\partial \Delta_{n+1},\partial\Delta_{n})\to
\Hsingtilde_{n-1}(\partial\Delta_{n})\]
(with the last map surjective for $n\geq 2$).
Moreover, the cover $\partial \Delta_{n+1}=\Delta_n\cup C_n$ induces by excision
\begin{align*}
 \Hsingtilde_{n}(\partial \Delta_{n+1},\partial\Delta_n)&\isom
   \Hsingtilde_n(\Delta_n,\partial\Delta_n)\oplus \Hsingtilde_n(C_n,\partial\Delta_n)\\
&\isom \Hsingtilde_n(\Delta_n,\partial \Delta_n)^2.
\end{align*}
Finally, we have for $n\geq 1$
\[ \Hsingtilde_{n+1}(\Delta_{n+1},\partial\Delta_{n+1})\isom \Hsingtilde_n(\partial \Delta_{n+1})\]
via the connecting morphism. For $n=0$, we get
\[ \Hsingtilde_1(\Delta_1,\partial\Delta_1)\isom\ker(\Hsingtilde_0(\partial \Delta_1)\to\Hsingtilde_0(\Delta_1))\isom\Q(0)\]
where we write $\Q(0)$ for the unit object of the tensor structure.
Putting these identifcations together, we get inductively isomorphisms between all the
$\Hsingtilde_n(\Delta_n,\partial\Delta_n)$. They are all isomorphic to $\Q(0)$.
Together with the previous Proposition, this means that every object of
$\MMSA(k)^\eff$ is a subquotient of some $\Q(0)^n$.

The functor $\MMSA(k)^\eff\to \Qmod$ is faithful and exact, hence it suffices to prove fullness. Fullness follows in general if it holds for
$\Q(0)\mapsto \Q$. This case is trivial. 

The functor $\cdot^\sa$ amounts to computing simplicial homology of a variety. This is the same as singular homology.
\end{proof}

In particular, the tensor category $\MMSA(k)^\eff$ is rigid. We drop the
supscript $\eff$ from now on.

\begin{cor}The functor $\cdot^\sa$ extends to a tensor functor between Tannakian categories
\[ \cdot^\sa:\MMN(k)\to \MMSA(k).\]
\end{cor}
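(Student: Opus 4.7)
The plan is to leverage the observation, already recorded by the author immediately before the corollary, that $\MMSA(k)^\eff$ is rigid (because it is equivalent to $\Qmod$). Thus the target of the putative extension is already rigid and Tannakian; nothing has to be done on that side, and one simply sets $\MMSA(k):=\MMSA(k)^\eff$. The only genuine work is to upgrade the source from $\MMN(\bar k)^\eff$ to $\MMN(\bar k)$.

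I would first recall the standard construction of non-effective Nori motives from \cite[Chapter~8]{period-buch}: $\MMN(\bar k)$ is obtained from $\MMN(\bar k)^\eff$ by formally inverting the Lefschetz motive $L:=\Hsingtilde_2(\Pe^1)$, and satisfies the universal property that any tensor functor from $\MMN(\bar k)^\eff$ to a rigid tensor category which sends $L$ to a $\tensor$-invertible object extends uniquely to a tensor functor on $\MMN(\bar k)$.

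Next I would apply this universal property to the tensor functor $\cdot^\sa:\MMN(\bar k)^\eff\to\MMSA(k)^\eff=\MMSA(k)$ provided by the preceding proposition. The image of $L$ is the singular homology $H_2(\Pe^1(\C);\Q)$, a one-dimensional $\Q$-vector space, hence $\tensor$-invertible in $\MMSA(k)=\Qmod$. The universal property then supplies the desired extension $\cdot^\sa:\MMN(\bar k)\to\MMSA(k)$, and its being a tensor functor, together with faithfulness and exactness, descends from the effective level.

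I do not anticipate any serious obstacle: once the theorem has trivialised the target, inverting Tate twists is a formality. The only minor point worth spelling out is that the equivalence $\MMSA(k)^\eff\simeq\Qmod$ produced by the theorem is genuinely monoidal, i.e.\ identifies the tensor product constructed in the earlier proposition with the usual one on $\Qmod$; but this is forced by the fact that the forgetful functor to $\Qmod$ is already a tensor functor and pins down the monoidal structure on the semisimple target.
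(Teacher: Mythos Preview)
Your proposal is correct and is exactly the argument the paper has in mind: the author states the corollary without proof, having just observed that $\MMSA(k)^\eff$ is rigid and dropped the superscript $\eff$. Your steps---set $\MMSA(k)=\MMSA(k)^\eff$, invoke the universal property of the localisation $\MMN(\bar k)=\MMN(\bar k)^\eff[L^{-1}]$, and check that $L^\sa$ is a one-dimensional $\Q$-vector space hence $\otimes$-invertible---are precisely the details the reader is expected to supply.
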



\section{Consequences}

We fix an embedding $\Qbar\subset\C$. Let $\Qtilde=\Qbar\cap\R$.

\begin{thm}There is no extension of algebraic de Rham cohomology from
algebraic varieties over $\Qbar$ (i.e., the quiver $\pairseff_\Qbar$)
to semi-algebraic varieties over $\Qtilde$ (i.e., the quiver
$\SA_\Qtilde$) compatible with the period isomorphism between singular cohomology and de Rham cohomology after extension of coefficients to $\C$.
\end{thm}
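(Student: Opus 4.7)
The plan is to argue by contradiction: assume such an extension exists, use the main theorem to pin down the extended de Rham functor on semi-algebraic motives so rigidly that only a single complex scalar of freedom survives, and then derive a numerical contradiction from the existence of transcendental periods like $2\pi i$.

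In detail, suppose there were a functor $H^{\sa}_\dR$ from $\SA_\Qtilde$ to finite-dimensional $\Qbar$-vector spaces together with a natural isomorphism $\pi_X\colon H^{\sa}_\dR(X)\otimes_\Qbar\C\xrightarrow{\isom}\Hsing(X)\otimes_\Q\C$ which, restricted along $\cdot^\sa\colon\pairseff_\Qbar\to\SA_\Qtilde$, reproduces the classical algebraic de Rham cohomology and its period isomorphism. Let $\Ah$ denote the $\Q$-linear abelian category of triples $(V,W,\phi)$ with $V$ a finite-dimensional $\Qbar$-vector space, $W$ a finite-dimensional $\Q$-vector space, and $\phi\colon V\otimes_\Qbar\C\xrightarrow{\isom} W\otimes_\Q\C$; the projection $(V,W,\phi)\mapsto W$ is faithful and exact. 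The data $(H^{\sa}_\dR,\Hsing,\pi)$ assemble into a representation $\SA_\Qtilde\to\Ah$ lifting $\Hsing$, so by the universal property of the diagram category it factors through an exact $\Q$-linear functor $F\colon\MMSA(\Qtilde)^\eff\to\Ah$.

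By the main theorem $\MMSA(\Qtilde)^\eff\simeq\Qmod$ with $\Hsingtilde$ corresponding to the identity. Any exact $\Q$-linear $F\colon\Qmod\to\Ah$ lifting the identity is determined by $F(\Q)$: its $W$-component must be $\Q$, so its $V$-component is a one-dimensional $\Qbar$-space equipped with a $\C$-isomorphism to $\C$, and after choosing a basis we obtain $F(\Q)\isom(\Qbar,\Q,c\cdot\id)$ for some $c\in\C^\times$ and hence $F(V)\isom(V\otimes_\Q\Qbar,\,V,\,c\cdot\id)$ naturally in $V\in\Qmod$. Composing with $\cdot^\sa\colon\MMN(\Qbar)^\eff\to\MMSA(\Qtilde)^\eff\simeq\Qmod$, which equals $\Hsing$ by the main theorem, we get a representation of $\MMN(\Qbar)^\eff$ in $\Ah$ that agrees with $(H_\dR,\Hsing,\pi^{cl})$ on $\pairseff_\Qbar$; by the universal property of $\MMN(\Qbar)^\eff$ the two representations are isomorphic, so for every Nori motive $M$ there is an isomorphism $H_\dR(M)\isom\Hsing(M)\otimes_\Q\Qbar$ of $\Qbar$-vector spaces under which the classical period isomorphism is identified with multiplication by the single scalar $c$.

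Applied to the trivial motive $\Q(0)$, whose period is $1$, this forces $c\in\Qbar^\times$. Applied to $H_1(\A^1\ohne\{0\})$, whose period is a nonzero rational multiple of $2\pi i$, it forces $2\pi i\in c\cdot\Qbar=\Qbar$, contradicting the transcendence of $2\pi i$. The crux is the rigidity step in the middle: once the factorization through $\MMSA(\Qtilde)^\eff\simeq\Qmod$ is available, every extended period datum is governed by one complex number, which is not enough to accommodate even a single transcendental period of a Nori motive.
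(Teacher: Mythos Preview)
Your argument is correct and follows essentially the same route as the paper: factor the combined de Rham/singular/period functor through $\MMSA(\Qtilde)^\eff\simeq\Qmod$, observe that this forces all periods to lie in $\Qbar$, and contradict this with $2\pi i$. The only difference is presentational: you make the single degree of freedom explicit as a scalar $c$ and eliminate it in two steps (first $c\in\Qbar^\times$ via $\Q(0)$, then the contradiction via $H_1(\A^1\ohne\{0\})$), whereas the paper phrases the same thing as ``the periods of $\Q(0)$ are in $\Qbar$, hence all periods are''.
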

\begin{proof}
Assume that such an extension exists. This means that the functor
\begin{align*} \ul{H}:\MMN(\Qbar)&\to \VValg\\
                            M&\mapsto (H^\dR(M),H^\sing(M),\mathrm{per})
\end{align*}
(assigning to a motive its de Rham realisation, Betti realisation and the period isomorphm)
factors via $\MMSA(\Qtilde)$. Every object in the latter category is
isomorphic to some $\Q(0)^n$. The periods of $\Q(0)$ are trivial, i.e., in
$\Qbar$. Hence the same is true for all objects in $\MMSA(\Qbar)$ and by the factorisation also for all objects of $\MMN(\Qbar)$.  However, we know that not all periods of motives over $\Qbar$ are themselves algebraic. For example, $2\pi i$ appears as a period of $\Q(1)$.
\end{proof}

\begin{rem}The same argument works for any $k\subset\C$ such that there
is some period not in $\bar{k}$. It does not apply to $\C$ itself. Indeed, we may artificially extend de Rham cohomology to $\SA_\R$ by defining it as singular cohomology with $\C$-coefficients.
\end{rem}

Let $k\subset\C$ be a subfield. We also fix an embedding
$\bar{k}\subset\C$. Let $X$ a variety over $k$. We denote $\hodge^n(X)$ its singular cohomology equipped with Deligne's mixed $\Q$-Hodge structure. The functor extends to $\MMN(k)$.

\begin{thm} Let $k\subset\C$ be a subfield. There is no extension of
the functor assigning to an algebraic variety over $k$ (more precisely, the quiver $\pairseff_k$) its $\Q$-Hodge structure to the category of semi-algebraic sets
over $\bar{k}\cap\R$ (more precisely, to the quiver $\SA_{\bar{k}\cap\R}$).
\end{thm}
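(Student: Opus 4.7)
The plan is to follow the same strategy as the preceding de Rham theorem: assume an extension exists, apply the universal property of the diagram category to produce a factorization through $\MMSA(\bar{k}\cap\R)$, use the main triviality result to force the Hodge structures of all motives over $k$ (accessed through this extension) to be trivial, and then exhibit a single motive that contradicts this.

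More concretely, I would first formalize an extension as a representation of the quiver $\SA_{\bar{k}\cap\R}$ in the category of $\Q$-mixed Hodge structures whose restriction along the composition
\[ \pairseff_k\longrightarrow \pairseff_{\bar{k}}\xrightarrow{\cdot^\sa}\SA_{\bar{k}\cap\R} \]
recovers Deligne's Hodge realization $\hodge^*$. The forgetful functor from $\Q$-mixed Hodge structures to $\Qmod$ is faithful and exact, and the representation is compatible with $\Hsing$, so the universal property of $\MMSA(\bar{k}\cap\R)^\eff=\Ch(\SA_{\bar{k}\cap\R},\Hsing)$ yields a faithful exact $\Q$-linear functor
\[ F:\MMSA(\bar{k}\cap\R)^\eff\longrightarrow \Q\text{-MHS}.\]
Invoking the main theorem of this paper, $\MMSA(\bar{k}\cap\R)^\eff\simeq\Qmod$, and any faithful exact $\Q$-linear functor from $\Qmod$ to $\Q$-MHS must send $\Q^n$ to the trivial weight-zero Hodge structure $\Q(0)^n$. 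Composing $F$ with $\MMN(k)\to\MMN(\bar{k})\xrightarrow{\cdot^\sa}\MMSA(\bar{k}\cap\R)$ would therefore force $\hodge^*(M)$ to be a trivial Hodge structure for every $M\in\MMN(k)$. This is immediately contradicted by $\Pe^1_k$: the object $\hodge^2(\Pe^1_k)\isom\Q(-1)$ is pure of weight $-2$ and is not isomorphic to any direct sum of copies of $\Q(0)$.

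The main obstacle, as in the de Rham version, is the bookkeeping needed to verify that the composition $\MMN(k)\to\MMN(\bar{k})\xrightarrow{\cdot^\sa}\MMSA(\bar{k}\cap\R)\xrightarrow{F}\Q\text{-MHS}$ genuinely reproduces $\hodge^*$ on motives of $k$-varieties, rather than merely agreeing on the underlying vector spaces. This rests on the compatibility of Deligne's Hodge structures with the base change $k\subset\bar{k}$ (both embedded in the fixed $\C$, so the underlying analytic spaces coincide) together with the uniqueness built into the universal property, which forces $F$ to agree with the hypothetical extension on the image of $\pairseff_k$. Once these compatibilities are pinned down, the contradiction with $\Q(-1)$ closes the argument.
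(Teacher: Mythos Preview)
Your approach is the same as the paper's, including the choice of $\hodge^2(\Pe^1)\cong\Q(-1)$ as the witness. One step, however, is not correct as stated: it is \emph{not} true that an arbitrary faithful exact $\Q$-linear functor from $\Qmod$ to mixed $\Q$-Hodge structures must send $\Q$ to $\Q(0)$; the functor $V\mapsto V\otimes_\Q\Q(-1)$ is a counterexample. What is true, and what the paper uses implicitly, is that the particular functor $F$ produced by the universal property satisfies $F\circ\Hsingtilde\cong(\text{hypothetical extension})$, so $F(\Q(0))=F(\Hsingtilde_0(\mathrm{pt}))$ is the Hodge structure of a point, which is the trivial $\Q(0)$. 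With this correction (or, alternatively, by arranging $F$ to be a tensor functor and hence to preserve the unit), your argument goes through. As a minor aside, $\Q(-1)$ is pure of weight $+2$, not $-2$; this does not affect the contradiction, since all that matters is $\Q(-1)\not\cong\Q(0)$.
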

\begin{proof}Assume such an extension exists.
As in the proof of the last theorem, this implies that $\hodge(M)$ is isomorphic to $\Q(0)^n$ for all objects of $\MMN(k)$. However, we know that $\hodge(M)$ is is non-trivial for most motives. For example,
$\hodge^2(\Pe^1)=\Q(-1)\neq\Q(0)$.
\end{proof}

\section{Generalisations}\label{sec:generalisations}
\begin{rem} We settled on $\Q$-coefficients in our presentation. This is not necssary. The same arguments work integrally or with coefficients in  any noetherian ring (of homological dimension at most $2$ for the tensor structure.)
\end{rem}
\begin{rem}
We focussed on semi-algebraic sets defined over fields that embedd into
$\R$ because this is the most important case. 
Everything works without any changes for any real closed field when replacing ordinary singular homology by the version for semi-algebraic sets introduced by
Delfs and Knebusch, see \cite{DK-homology}.
The
category of semi-algebraic motives can still be defined and is equivalent to
$\Qmod$.

 Amusingly the representation $\pairseff_{\bar{k}}\to\SA_k$ can now be used to define singular homology for such ground fields without an embedding into $\R$. 
Recall that the standard method for defining singular homology of algebraic varieties is by choosing models over fields $K$ of finite type over $\Q$. Such a $K$ can be embedded into $\C$. This approach is tedious and it is as tedious to verify the fact that the two methods define the same functor, up to isomorphism.
\end{rem}
\begin{rem}
The constructions can also be used to define a category of motives
for definable spaces in any o-minimal structure over any real closed field,
 see \cite{D:oMin}.
Again the resulting tensor category is equivalent to $\Qmod$ by the same arguments.
\end{rem}

\begin{rem}All arguments hinge on the existence of semi-algebraic or more generally definable triangulations. These exist in the $C^0$-setting (this seems standard), but also in the $C^1$-setting, see \cite{ohmoto-shiota-triangulation} and \cite{omin-triang}. However, the questions of $C^p$ or even $C^\infty$-triangulations seems to be open. Note that the standard versions e.g. in \cite{D:oMin} give a $C^0$-triangulation which is $C^p$ on the interior of each face---this is very different from a $C^p$-map.
Hence our arguments do not apply to the category of  semi-algebraic $C^p$-manifolds. Neither do they apply to definable complex analytic spaces as studied by Bakker, Brunebarbe and Tsimerman in
\cite{omingaga} 
\end{rem}

\bibliographystyle{alpha}
\bibliography{periods}

\end{document}